\theoremstyle{plain}
\newtheorem{theorem}{Theorem}
\newcommand{\Ss}{\mathcal{S}}
\newcommand{\RR}{\mathbb{R}}
\newcommand{\ZZ}{\mathbb{Z}}
\newcommand{\NN}{\mathbb{N}}
\newcommand{\CC}{\mathbb{C}}
\renewcommand{\Im}{\operatorname{Im}}
\renewcommand{\Re}{\operatorname{Re}}
\numberwithin{equation}{section}
\newcommand{\FF}[3]{{}_{2}F_{1}%
  \mathopen{}\left(\genfrac{}{}{0pt}{}{#1}{#2};#3\right)\mathclose{}}
\newcommand{\unoFFdos}[3]{{}_{1}F_{2}%
  \mathopen{}\left(\genfrac{}{}{0pt}{}{#1}{#2};#3\right)\mathclose{}}
\newcommand{\tresFFdos}[3]{{}_{3}F_{2}%
  \mathopen{}\left(\genfrac{}{}{0pt}{}{#1}{#2};#3\right)\mathclose{}}
\title{Summing Sneddon-Bessel series explicitly\thanks{This research was 
partially supported by grants PID2021-124332NB-C21 and PID2021-124332NB-C22
(FEDER(EU)/Mi\-nis\-te\-rio de Cien\-cia e Inno\-va\-ci\'on-Agen\-cia Esta\-tal de Inves\-ti\-ga\-ci\'on),
FQM-262 (Jun\-ta de Anda\-lu\-c\'ia), and E48\_20R (Go\-bier\-no de Ara\-g\'on).}}
\author{Antonio J. Dur\'an\textsuperscript{1}, Mario P\'erez\textsuperscript{2} 
 and Juan L. Varona\textsuperscript{3}
\\[6pt]
\small\textsuperscript{1}Departamento de An\'alisis Matem\'atico and IMUS,
Universidad de Sevilla,\\[-2pt]
\small 41080 Sevilla, Spain. Email: {duran@us.es}\\[6pt]
\small\textsuperscript{2}Departamento de Matem\'aticas and IUMA,
Universidad de Zaragoza,\\[-2pt]
\small 50009 Zaragoza, Spain. Email: {mperez@unizar.es}\\[6pt]
\small\textsuperscript{3}Departamento de Matem\'aticas y Computaci\'on, 
Universidad de La Rioja,\\[-2pt]
\small 26006 Logro\~no, Spain. Email: {jvarona@unirioja.es}}
\date{March 23, 2022}
\begin{document}

\maketitle

\begin{center}\itshape\small
In celebration of the centenary of the first edition\\
of Watson's ``Treatise on the theory of Bessel functions''
\bigskip
\end{center}

\begin{abstract}
We sum in a close form the Sneddon-Bessel series
\[
   \sum_{m=1}^\infty
   \frac{J_\alpha(x j_{m,\nu})J_\beta(y j_{m,\nu})}
   {j_{m,\nu}^{2n+\alpha+\beta-2\nu+2} J_{\nu+1}(j_{m,\nu})^2},
\]
where $0<x$, $0<y$, $x+y<2$, $n$ is an integer, $\alpha,\beta,\nu\in \CC\setminus \{-1,-2,\dots \}$ with $2\Re \nu<2n+1+\Re \alpha+\Re \beta$ and $\{j_{m,\nu}\}_{m\geq 0}$ are the zeros of the Bessel function $J_\nu$ of order~$\nu$.
As an application we prove some extensions of the Kneser-Sommerfeld expansion.

\medskip

\noindent
\textbf{Keywords:} 
Bessel functions, Bessel series, Sneddon-Bessel series, Kneser-Sommerfeld expansions,
zeros, hypergeometric functions.

\medskip

\noindent
\textbf{Mathematics Subject Classification:} Primary 33C10; Secondary 33C20.
\end{abstract}

\section{Introduction}

Sneddon considered in \cite[\S\,2.2]{Sne} the following Bessel series in two variables:
\begin{equation}
\label{lap}
   S_q^{\alpha,\beta,\nu}(x,y)
   = \sum_{m=1}^\infty
   \frac{J_\alpha(x j_{m,\nu})J_\beta(y j_{m,\nu})}{j_{m,\nu}^{q} J_{\nu+1}(j_{m,\nu})^2},
\end{equation}
where $0<x$, $0<y$, $x+y<2$ and $\{j_{m,\nu}\}_{m\geq 0}$ are the zeros of the Bessel function $J_\nu$ of order~$\nu$.

The purpose of this paper is to compute explicitly these Sneddon-Bessel series for
\[
   q_n = 2n+\alpha+\beta-2\nu+2,\quad n\in \ZZ,
\]
under mild conditions on the parameters $\alpha,\beta$ and $\nu$.

The problem of the explicit summation of Bessel series is a classical topic, but there is no doubt that it remains of interest today (\cite[\S\,6.8]{Br}, \cite{Gre}) and active research is still being done (specially in applied mathematics, mathematical physics and engineering).

In order to state our result in full detail, we need some notation. For $\nu \in \CC \setminus \{-1,-2,\dots \}$, let us consider the entire function
\begin{equation}
\label{def:Phibeta}
  \Phi_\nu(z) = 2^\nu \Gamma(\nu+1) \frac{J_\nu(z)}{z^\nu}
  = \Gamma(\nu+1) \sum_{n=0}^\infty \frac{(-1)^n (z/2)^{2n}}{n!\, \Gamma(n+\nu+1)}.
\end{equation}
Define now the polynomial $\delta_n^{\alpha,\beta,\nu}(x,y)$ (of degree~$2n$), $n\geq 0$, by the generating function
\[
   \frac{\Phi_{\alpha}(xz) \Phi_{\beta}(yz)}{\Phi_\nu(z)^2}
   = \sum_{n=0}^\infty \delta_n^{\alpha,\beta,\nu}(x,y)z^{2n}.
\]
This generating function allows the explicit computation of the polynomials $\delta_n^{\alpha,\beta,\nu}(x,y)$ recursively from the Taylor coefficients of the functions $\Phi_\alpha$, $\Phi_\beta$ and~$\Phi_\nu$.

We also define recursively the functions $\phi_n^{\alpha,\beta,\nu}(t)$, $n\geq 0$, by
\begin{align}
\label{hp1}
   \phi_0^{\alpha,\beta,\nu}(t)
   &= \frac{1}{\nu} \binom{\alpha}{\nu}\, \FF{\nu-\alpha,\nu}{\beta+1}{t^2}
   \\
\label{hp2}
   \phi_n^{\alpha,\beta,\nu}(t)
   &= \frac{1}{2\nu-2n}
   \left(
   \frac{1}{2(\alpha+1)} \phi_{n-1}^{\alpha+1,\beta,\nu}(t)
   + \frac{t^2}{2(\beta+1)} \phi_{n-1}^{\alpha,\beta+1,\nu}(t)
   \right),
\end{align}
where as usual $\FF{a,b}{c}{t}$ denotes the hypergeometric function
\[
   \FF{a,b}{c}{t} = \sum_{m=0}^{\infty}
   \frac{\Gamma(a+m) \Gamma(b+m) \Gamma(c)}{\Gamma(a) \Gamma(b) \Gamma(c+m)}
   \cdot \frac{t^m}{m!}.
\]
If we write
\[
   \delta_n^{\alpha,\beta,\nu}(x,y)
   = \sum_{2j + 2k \leq 2n} A_{2j,2k,n}^{\alpha,\beta,\nu} x^{2j} y^{2k},
\]
then for $\nu \notin \{0,1,\dots, n\}$, $2\Re\nu < 2n + 1 + \Re\alpha + \Re\beta$ and $0 < y \leq x < 1$ (and also for $x+y=2$ if $2\Re \nu <2n+\Re \alpha+\Re \beta$), we show that
\begin{multline}
\label{sod2i}
   S_{q_n}^{\alpha,\beta,\nu}(x,y)
   = \frac{\Gamma(\nu+1)^2 x^\alpha y^\beta}{2^{q_0}\Gamma(\alpha+1)\Gamma(\beta+1)}
   \bigg( x^{2n-2\nu} \phi_{n}^{\alpha,\beta,\nu}(y/x)
   \\
   - \sum_{2j + 2k \leq 2n}
   \frac{A_{2j,2k,n}^{\alpha,\beta,\nu}}{j+k+\nu - n} x^{2j} y^{2k}
   \bigg),
\end{multline}
if $n\geq 0$, and
\begin{equation}
\label{sod22}
   S_{q_n}^{\alpha,\beta,\nu}(x,y)
   = \frac{x^{\alpha-2\nu+2n}y^\beta\Gamma(\nu-n)}{2^{q_n}\Gamma(\beta+1)\Gamma(n+\alpha-\nu+1)}
   \, \FF{\nu-n,\nu-\alpha-n}{\beta+1}{\frac{y^2}{x^2}},
\end{equation}
if $n<0$. The case $\nu\in\{0,1,\dots, n\}$ can be computed by passing to the limit.

Some particular cases of \eqref{sod2i} and \eqref{sod22} are already known. For instance:
\begin{itemize}
\item[(a)] The case $x=1$, $\alpha=\mu+\nu+1$, $\beta=\nu$ and $n=-1$ is \cite[p.~690, (9)]{PBM}.

\item[(b)] The case $\alpha=\beta=\nu$, $n\geq 0$, $0\leq y\leq x\leq 1$, is packaged in the Kneser-Sommerfeld expansion (see \cite[(2)]{Mar})
\begin{equation}
\label{ks}
   \sum_{m=1}^\infty
   \frac{J_\nu(x j_{m,\nu})J_\nu(y j_{m,\nu})}{(j_{m,\nu}^{2}-z^2) J_{\nu+1}(j_{m,\nu})^2}
   = \frac{\pi J_\nu(yz)}{4J_\nu(z)}
   \big(Y_\nu(z)J_\nu(xz)-J_\nu(z)Y_\nu(xz)\big)
\end{equation}
(more precisely: $S_{q_n}^{\nu,\nu,\nu}(x,y)$, $n\geq 0$, are the Taylor coefficients at $z=0$ of the analytic function of $z$ on the right hand side).

\item[(c)] The case $n<0$ was computed by Sneddon \cite[\S\,2.2]{Sne} and more recently by Martin~\cite{Mar}.
\end{itemize}

The content of the paper is as follows. In Section~\ref{s1} we use the calculus of residues to find a partial fraction expansion of functions of the form $\frac{f(z)}{\Phi_\nu(z)^2}$, where $f$ is an entire function satisfying a suitable bound in $\CC$ (see Theorem \ref{teoremacuadrado} for details).

Theorem~\ref{teoremacuadrado} leads us to a partial differential equation for the Sneddon-Bessel series $S_{q_n}^{\alpha,\beta,\nu}(x,y)$, $n\geq 0$, which we solve in Section~\ref{osf} to find \eqref{sod2i}. Then, the identity~\eqref{sod22} can be easily deduced from the case $n=0$ by differentiation.

Using our identity \eqref{sod2i} for the Sneddon-Bessel series we find in Section~\ref{ksex} some extensions of the Kneser-Sommerfeld expansion \eqref{ks}, among which is the following:
\begin{equation}
\label{ksee}
   \sum_{m=1}^\infty
   \frac{j_{m,\nu}^{\nu-\beta}J_\nu(x j_{m,\nu})J_\beta(y j_{m,\nu})}
   {(j_{m,\nu}^{2}-z^2) J_{\nu+1}(j_{m,\nu})^2}
   = \frac{\pi J_\beta(yz)}{4 z^{\beta-\nu}J_\nu(z)}
   \big(Y_\nu(z)J_\nu(xz)-J_\nu(z)Y_\nu(xz)\big),
\end{equation}
if $\Re\nu < \Re\beta + 1$ and $0 \leq y \leq x \leq 1$ (as usual $Y_\nu$ denotes the Bessel function of the second kind).

\section{Preliminaries}

The zeros of the function $\Phi_\nu(w)$ defined by~\eqref{def:Phibeta}, that is, the zeros of the even function $J_{\nu}(w)/w^{\nu}$, are simple and can be ordered as a double sequence $\{j_{m,\nu}\}_{m\in \ZZ \setminus\{0\}}$ with $j_{-m,\nu} = -j_{m,\nu}$ and $0 \leq \Re j_{m,\nu} \leq \Re j_{m+1,\nu}$ for $m \geq 1$ (\cite[\S\,15.41, p.~497]{Wat}). Although these zeros depend on $\nu$, we will often omit this dependence to avoid unnecessary complications in the notation. The imaginary part of these zeros is bounded and, when $m$ is a sufficiently large integer, there is exactly one zero in the strip $m\pi + \frac{\pi}{2} \Re \nu + \frac{\pi}{4} < \Re z < (m+1)\pi + \frac{\pi}{2} \Re \nu + \frac{\pi}{4}$ (\cite[\S\,15.4, p.~497]{Wat}), so that
\[
   \lim_{m\to+\infty} \frac{|j_m|}{\pi m} = 1.
\]

It follows from the estimate
\[
   J_\nu(z)
   = 2^{1/2} (\pi z)^{-1/2}\left(
   \cos\Big(z - \frac{\nu}{2}\pi - \frac{\pi}{4}\Big) + o(1)
   \right),
   \quad z \to \infty
\]
(\cite[Eq.~10.7.8]{DLMF}, see also~\cite[\S\,7.21(1), p.~199]{Wat}) that
\[
   J_\nu(z)^2 + J_{\nu+1}(z)^2
   = \frac{2}{\pi z} \left(1 + e^{2 |\Im z|} o(1) \right),
   \quad z \to \infty,
\]
where the limit $z \to \infty$ is to be taken inside a sector $|\arg(z)| \leq \pi - \delta$. Thus,
\begin{equation}
\label{Jb+1(jm)}
   0 < c \leq |J_{\nu+1}(j_m)^2 j_m| \leq C
\end{equation}
for some constants $c$ and $C$ not depending on $m$. In terms of~$\Phi_{\nu+1}$,
\[
   0 < c \leq |\Phi_{\nu+1}(j_m)|^2 |j_m|^{2\Re\nu + 3} \leq C
\]
for some constants $c$ and $C$ not depending on~$m$.

Bessel functions satisfy the bound
\begin{equation}
\label{cotasupJ}
   |J_\nu(z)| \leq C \frac{e^{|\Im z|}}{|z|^{1/2}},
\end{equation}
for $|z|$ large enough, with a constant $C$ depending only on~$\nu$.
To be precise, for $|z| > \varepsilon > 0$ and $\nu$ on a compact set $K$, there is a constant $C$ depending only on $\varepsilon$ and $K$, as follows from~\cite[Eq.~10.4.4 and \S\,10.17(iv)]{DLMF}.

For $\mu$ and $\eta$ satisfying $\Re \mu>\Re \eta>-1$, consider the integral transform $T_{\mu,\eta}$ given by
\begin{equation}
\label{itc}
   T_{\mu,\eta}(f)(x) = \frac{1}{2^{\mu-\eta-1}\Gamma(\mu-\eta)}
   \int_0^1 f(xs) s^{2\eta+1} (1-s^2)^{\mu-\eta-1} \, ds
\end{equation}
(with a small abuse of notation, we will often write $T_{\mu,\eta}(f(x))$ if it does not cause confusion).

Sonin's formula for the Bessel functions (\cite[12.11(1), p.~373]{Wat}) can be written as
\begin{equation}
\label{eq:sonin}
   \frac{J_{\mu}(x)}{x^\mu} = \frac{2^{\eta+1-\mu}}{\Gamma(\mu-\eta)}
   \int_0^1 \frac{J_\eta(xs)}{(xs)^{\eta}} s^{2\eta+1} (1-s^2)^{\mu-\eta-1} \, ds
   = T_{\mu,\eta} \left(\frac{J_\eta(x)}{x^{\eta}}\right),
\end{equation}
valid for $\Re \mu > \Re \eta>-1$.
For $2\Re \eta+r+2>0$, we also have
\begin{equation}
\label{eaq}
   T_{\mu,\eta}(x^{r}) =
   \frac{\Gamma(\eta+\frac{r}{2}+1)}{2^{\mu-\eta} \Gamma(\mu+\frac{r}{2}+1)} \,x^{r},
\end{equation}
as follows from the identity
\[
   \int_0^1 s^{a}(1-s^2)^{b}\,ds
   = \frac{\Gamma(\frac{a+1}{2}) \Gamma(b+1)}{2 \Gamma(\frac{a+1}{2} + b + 1)},
   \qquad \Re a,\ \Re b >-1.
\]
The identities \eqref{eq:sonin} and \eqref{eaq} can be extended for $\Re \eta < -1$ as follows. For complex numbers $\mu$, $\eta$ and a positive integer $h$ satisfying $\Re \eta > - \frac{h}{2} - 1$, $\Re \mu>\Re \eta+h$, consider the integral transform $T_{\mu,\eta,h}$ given by
\begin{equation}
\label{itch}
   T_{\mu,\eta,h}(f)(x) = \frac{(-1)^h 2^{\eta+1-\mu}\Gamma(2\eta+2)}{\Gamma(\mu-\eta) \Gamma(2\eta+2+h)}
    \int_0^1 \frac{d^h}{ds^h}(f(xs)(1-s^2)^{\mu-\eta-1})
   s^{2\eta+h+1} \,ds.
\end{equation}
To be precise, $\eta$ should not be half a negative integer (in case it is we will manage somehow).

It is then easy to check that
\begin{align}
\notag
   T_{\mu,\eta,h}(x^{r})
   &= \frac{\Gamma(\eta+\frac{r}{2}+1)}{2^{\mu-\eta}\Gamma(\mu+\frac{r}{2}+1)} \,x^{r},
   \\
\label{lab2}
   T_{\mu,\eta,h} \left(\frac{J_\eta(x)}{x^{\eta}}\right)
   &= \frac{J_{\mu}(x)}{x^\mu}.
\end{align}

\section{Partial fraction decomposition of Bessel functions}
\label{s1}

In this section, we use the calculus of residues to find a partial fraction expansion of functions of the form $\frac{f(z)}{\Phi_\nu(z)^2}$, where $f$ is an entire function with some growth control.

\begin{theorem}
\label{teoremacuadrado}
Let $f$ be an entire function satisfying
\[
   |f(z)| \leq c (1+|z|)^N e^{\kappa|\Im z|},
   \quad z \in \CC,
\]
for certain constants $c > 0$, $N \in \RR$, and $\kappa \leq 2$. Let $\nu \in \CC \setminus \{-1,-2,\dots\}$ and $n$ be a nonnegative integer such that
\[
   N + 1 + 2 \Re\nu < n,
   \quad \text{if }\, \kappa = 2,
\]
or
\[
   N + 2 \Re\nu < n,
   \quad \text{if }\, \kappa < 2.
\]
Then
\begin{multline*}
   \frac{1}{n!} \frac{d^n}{dt^n} \left( \frac{f(t)}{\Phi_\nu(t)^2} \right)
   \\
   = \sum_{m \in \ZZ \setminus \{0\}} 4(\nu+1)^2
   \frac{\big((2\nu+1) t - (2\nu-n) j_m\big) f(j_m) - j_m(j_m-t) f'(j_m)}
   {(j_m-t)^{n+2} j_m^3 \Phi_{\nu+1}(j_m)^2},
\end{multline*}
where the convergence is uniform in bounded subsets of $\CC \setminus \{j_m : m \in \ZZ \setminus \{0\} \}$.
\end{theorem}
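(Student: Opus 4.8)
The plan is to realize the left-hand side as a single residue and the right-hand side as the sum of the remaining residues of one meromorphic function, integrated over a well-chosen family of expanding contours. Concretely, fix $t\notin\{j_m\}$ and consider
\[
   F(z)=\frac{f(z)}{\Phi_\nu(z)^2\,(z-t)^{n+1}}.
\]
This has a pole of order $n+1$ at $z=t$, whose residue is exactly $\frac{1}{n!}\frac{d^n}{dt^n}\!\left(f(t)/\Phi_\nu(t)^2\right)$ by Cauchy's formula, and double poles at each zero $j_m$ of $\Phi_\nu$ (recall these are simple). If I can choose radii $R_k\to\infty$ with the circles $|z|=R_k$ avoiding all the $j_m$ and along which $\oint_{|z|=R_k}F\to 0$, then the residue theorem gives $\frac{1}{n!}(f/\Phi_\nu^2)^{(n)}(t)=-\sum_m\operatorname{Res}_{z=j_m}F$, which is the asserted identity. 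Moreover the symmetric partial sums (over the $j_m$ inside $|z|=R_k$, a set symmetric under $j_{-m}=-j_m$) will converge, uniformly in $t$ on bounded sets, precisely because the contour bound will be uniform in such $t$.

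For the residue at a double pole $j_m$, I would use two classical facts about $\Phi_\nu$. First, differentiating $z^{-\nu}J_\nu$ gives $\Phi_\nu'(z)=-\frac{z}{2(\nu+1)}\Phi_{\nu+1}(z)$, so $\Phi_\nu'(j_m)^2=\frac{j_m^2}{4(\nu+1)^2}\Phi_{\nu+1}(j_m)^2$; this produces the factor $4(\nu+1)^2/(j_m^2\Phi_{\nu+1}(j_m)^2)$ in the answer. Second, $\Phi_\nu$ solves $z\Phi_\nu''+(2\nu+1)\Phi_\nu'+z\Phi_\nu=0$, so at a zero $\Phi_\nu''(j_m)/\Phi_\nu'(j_m)=-(2\nu+1)/j_m$. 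Writing the Laurent expansion $1/\Phi_\nu(z)^2=(z-j_m)^{-2}\Phi_\nu'(j_m)^{-2}\big(1-\frac{\Phi_\nu''(j_m)}{\Phi_\nu'(j_m)}(z-j_m)+\cdots\big)$ and multiplying by the Taylor expansion of $f(z)/(z-t)^{n+1}$, the coefficient of $(z-j_m)^{-1}$ is
\[
   \frac{1}{\Phi_\nu'(j_m)^2}\Big(\frac{d}{dz}\frac{f(z)}{(z-t)^{n+1}}\Big|_{z=j_m}
   +\frac{2\nu+1}{j_m}\,\frac{f(j_m)}{(j_m-t)^{n+1}}\Big).
\]
Carrying out the differentiation and simplifying with the two facts above reproduces, after the overall sign from $-\sum\operatorname{Res}$, exactly the summand in the statement.

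The main obstacle is the contour estimate, and it is here that the two hypotheses on $n$ enter. Since $|j_m|\sim\pi m$ with bounded imaginary parts, I can pick $R_k\sim\pi(k+\tfrac12)$ so that $|z|=R_k$ passes midway between consecutive zeros; the asymptotics of $J_\nu$ then yield a lower bound $|\Phi_\nu(z)|\geq c\,e^{|\Im z|}|z|^{-\Re\nu-1/2}$ on these circles, with $c$ independent of $k$ (for $\Re z<0$ I would use that $\Phi_\nu$ is even). Combined with the growth bound on $f$ and $|z-t|\asymp R_k$, this gives $|F(z)|\leq C\,(1+R_k)^N R_k^{2\Re\nu-n}\,e^{(\kappa-2)|\Im z|}$ on $|z|=R_k$. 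When $\kappa=2$ the exponential is $1$ and integrating over a contour of length $\asymp R_k$ costs one extra power, so $\oint_{|z|=R_k}F=O(R_k^{N+1+2\Re\nu-n})\to0$ exactly under $N+1+2\Re\nu<n$. When $\kappa<2$ the factor $e^{(\kappa-2)|\Im z|}$ concentrates the mass of $\oint e^{(\kappa-2)|\Im z|}\,|dz|$ near the two points where the circle meets the real axis, making this integral bounded in $k$ (it is $O((2-\kappa)^{-1})$, since $\sin\theta\gtrsim\theta$ near $\theta=0$); thus no extra power is paid and $\oint_{|z|=R_k}F=O(R_k^{N+2\Re\nu-n})\to0$ exactly under $N+2\Re\nu<n$. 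This case distinction is the delicate part and pins down the sharp hypotheses; everything else is the residue bookkeeping above.
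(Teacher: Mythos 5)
Your proposal is correct and follows essentially the same route as the paper: both apply the residue theorem to $f(z)/\bigl(\Phi_\nu(z)^2(z-t)^{n+1}\bigr)$ over expanding circles chosen to stay away from the zeros $j_m$ (so that $|J_\nu(z)|\geq c\,e^{|\Im z|}|z|^{-1/2}$ holds there), compute the double-pole residues via $\Phi_\nu'(z)=-\frac{z}{2(\nu+1)}\Phi_{\nu+1}(z)$ together with the second-order relation for $\Phi_\nu''$ at its zeros, and obtain the vanishing of the contour integral through exactly the same case distinction, with the arc-length factor $R_k$ absorbed by $\int_0^{\pi/2}e^{(\kappa-2)R_k 2s/\pi}\,ds=O(R_k^{-1})$ when $\kappa<2$. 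The only cosmetic difference is that you extract the residue at $j_m$ from the Laurent coefficients using the ODE $z\Phi_\nu''+(2\nu+1)\Phi_\nu'+z\Phi_\nu=0$, whereas the paper computes the same quantity by a direct limit; your remark that the sum must be understood through symmetric partial sums when $\kappa<2$ is a correct and welcome observation.
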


\begin{proof}
Let us fix $t \in \CC \setminus \{j_m : m \in \ZZ \setminus \{0\} \}$ and consider the holomorphic function $\frac{f(w)}{(w-t)^{n+1} \Phi_\nu(w)^2}$. It has a pole at $t$ of order $n+1$, and a double pole at each $j_m$, $m\in \ZZ \setminus \{0\}$. The residue at $t$ is, therefore,
\[
   \frac{1}{n!} \frac{d^n}{dw^n} \left( \frac{f(w)}{\Phi_\nu(w)^2} \right)_{w=t},
\]
while the residue at each $j_m$ is
\begin{multline}
\label{residuo}
   \lim_{w\to j_m} \frac{d}{dw} \left( \frac{(w-j_m)^2 f(w)}{(w-t)^{n+1} \Phi_\nu(w)^2} \right)
   \\
   = \frac{d}{dw} \left(\frac{f(w)}{(w-t)^{n+1}}\right)_{w=j_m}
   \left( \lim_{w\to j_m} \frac{w-j_m}{\Phi_\nu(w)} \right)^2
   \\
   + \frac{f(j_m)}{(j_m-t)^{n+1}} \frac{d}{dw} \left(
   \left(\frac{w-j_m}{\Phi_\nu(w)} \right)^2
   \right)_{w=j_m}.
\end{multline}
Let us consider separately the last term:
\begin{align*}
   \frac{d}{dw}\left(
   \left(\frac{w-j_m}{\Phi_\nu(w)} \right)^2
   \right)_{w=j_m}
   &= 2 \frac{1}{\Phi_\nu'(j_m)}
   \lim_{w \to j_m} \frac{\Phi_\nu(w) - (w-j_m) \Phi_\nu'(w)}{\Phi_\nu(w)^2}
   \\
   &= \frac{2}{\Phi_\nu'(j_m)}
   \lim_{w \to j_m} \frac{-(w-j_m) \Phi_\nu''(w)}{2 \Phi_\nu(w) \Phi_\nu'(w)}
   \\
   &= \frac{2}{\Phi_\nu'(j_m)} \cdot \frac{-\Phi_\nu''(j_m)}{2\Phi_\nu'(j_m) \Phi_\nu'(j_m)}
   \\
   &= -\frac{\Phi_\nu''(j_m)}{\Phi_\nu'(j_m)^3}.
\end{align*}
Now, the identities
\begin{align}
\label{Phiprima}
   \Phi_\nu'(z) &= -\frac{z}{2(\nu+1)} \Phi_{\nu+1}(z),
   \\
\notag
   \Phi_\nu''(z) &= - \Phi_\nu(z) + \frac{2\nu+1}{2(\nu+1)} \Phi_{\nu+1}(z)
\end{align}
(see~\cite[\S\,3.2, p.~45]{Wat}) prove that
\[
   -\frac{\Phi_\nu''(j_m)}{\Phi_\nu'(j_m)^3}
   = \frac{4 (\nu+1)^2 (2\nu+1)}{j_m^3 \Phi_{\nu+1}(j_m)^2},
\]
so that, going back to~\eqref{residuo} and using~\eqref{Phiprima} again, the residue at $j_m$ is
\begin{gather*}
   \frac{(j_m - t) f'(j_m) - (n+1) f(j_m)}{(j_m-t)^{n+2}}
   \Big( \frac{1}{\Phi_\nu'(j_m)} \Big)^2
   + \frac{f(j_m)}{(j_m-t)^{n+1}}
   \cdot \frac{4 (\nu+1)^2 (2\nu+1)}{j_m^3 \Phi_{\nu+1}(j_m)^2}
   \\
   \begin{split}= \frac{(j_m - t) f'(j_m) - (n+1) f(j_m)}{(j_m-t)^{n+2}}
   \cdot \frac{4(\nu+1)^2}{j_m^2 \Phi_{\nu+1}(j_m)^2}
   \\ \null+ \frac{f(j_m)}{(j_m-t)^{n+1}}
   \cdot \frac{4 (\nu+1)^2 (2\nu+1)}{j_m^3 \Phi_{\nu+1}(j_m)^2}\end{split}
   \\
   = 4(\nu+1)^2 \, \frac{j_m (j_m-t) f'(j_m) + \big((2\nu-n) j_m - (2\nu+1) t\big) f(j_m)}
   {(j_m-t)^{n+2} j_m^3 \Phi_{\nu+1}(j_m)^2}.
\end{gather*}
Thus, if $D = \{z \in \CC : |z| = A\}$ is a large circle of radius $A > |t|$ with the only condition, at the moment, that none of the points $j_m$ lie in $D$, the calculus of residues gives
\begin{multline}
\label{integralporresiduos}
   \frac{1}{2\pi i} \int_D \frac{f(w)}{(w-t)^{n+1} \Phi_\nu(w)^2} \, dw
   = \frac{1}{n!} \frac{d^n}{dw^n} \left( \frac{f(w)}{\Phi_\nu(w)^2} \right)_{w=t}
   \\
   + \sum_{|j_m| < A} 4(\nu+1)^2 \,
   \frac{j_m (j_m-t) f'(j_m) + \big((2\nu-n) j_m - (2\nu+1) t\big) f(j_m)}
   {(j_m-t)^{n+2} j_m^3 \Phi_{\nu+1}(j_m)^2}.
\end{multline}
Now, the value of $A$ can be chosen arbitrarily large and such that there exists some constant $c>0$, independent of~$A$, satisfying
\[
   c\, \frac{e^{|\Im w|}}{|w|^{1/2}} \leq |J_\nu(w)|
\]
for $w \in D$ (see~\cite[formula (2.4)]{DPV}). Thus,
\[
   \bigg| \frac{f(w)}{(w-t)^{n+1} \Phi_\nu(w)^2} \bigg|
   = C\, \frac{|f(w)| |w^\nu|^2}{|w-t|^{n+1} |J_\nu(w)|^2}
   \leq C\, \frac{A^{N+1 + 2\Re\nu} e^{(\kappa-2)|\Im w|}}{(A - |t|)^{n+1}}
\]
for $w \in D$, where $C$ is a constant, independent of $A$, but possibly different at each occurrence. The natural parametrization of $D$ then gives
\begin{align*}
   \bigg| \frac{1}{2\pi i} \int_D \frac{f(w)}{(w-t)^{n+1} \Phi_\nu(w)^2} \, dw \bigg|
   &\leq \frac{C}{2\pi} \int_{-\pi}^\pi
   \frac{A^{N+2 + 2\Re\nu} e^{(\kappa-2)A|\sin s|}}{(A - |t|)^{n+1}} \, ds
   \\
   &= \frac{2C}{\pi} \frac{A^{N+2 + 2\Re\nu}}{(A - |t|)^{n+1}}
   \int_0^{\pi/2} e^{(\kappa-2)A|\sin s|} \, ds
   \\
   &\leq \frac{2C}{\pi} \frac{A^{N+2 + 2\Re\nu}}{(A - |t|)^{n+1}}
   \int_0^{\pi/2} e^{(\kappa-2)A 2s/\pi} \, ds.
\end{align*}
Now, the last integral is obviously a constant if $\kappa = 2$, while it is $O(A^{-1})$ if $\kappa < 2$.
Taking this bound into~\eqref{integralporresiduos} and letting $A$ be arbitrarily large proves the theorem.
\end{proof}

Evaluating at $t=0$ the identity of Theorem~\ref{teoremacuadrado}, gives
\begin{equation}
\label{evalt=0}
   \frac{1}{n!}
   \frac{d^n}{dt^n} \left( \frac{f(t)}{\Phi_\nu(t)^2} \right)_{t=0}
   = - 4(\nu+1)^2 \sum_{m \in \ZZ \setminus \{0\}}
   \frac{(2\nu-n) f(j_m) + j_m f'(j_m)}{j_m^{n+4} \Phi_{\nu+1}(j_m)^2},
\end{equation}
under the assumption that
\begin{align*}
   N + 1 + 2\Re \nu &< n, \quad \text{if } \kappa = 2,
   \\
   N + 2\Re \nu &< n, \quad \text{if } \kappa < 2.
\end{align*}
We then define the double Bessel numbers $\delta_n^{f,\nu}$ by
\[
   \delta_n^{f,\nu}
   = \frac{1}{n!}
   \frac{d^n}{dt^n} \left( \frac{f(t)}{\Phi_\nu(t)^2} \right)_{t=0}.
\]
These are the Taylor coefficients of $\frac{f(t)}{\Phi_\nu(t)^2}$ at $t=0$, in other words,
\[
   \frac{f(t)}{\Phi_\nu(t)^2} = \sum_{n=0}^\infty \delta_n^{f,\nu}t^n
\]
in a neighbourhood of~$0$.

\section{Summing Sneddon-Bessel series explicitly}
\label{osf}

Our goal is to sum the Sneddon-Bessel series
\[
   \sum_{m\geq 1}
   \frac{J_{\alpha} (xj_m) J_{\beta}(yj_m)}
   {j_m^{2n+\alpha+\beta-2\nu+2} J_{\nu +1}(j_m)^2},
\]
where $\alpha, \beta,\nu \in \CC \setminus \{-1,-2,\dots\}$, $0 < x$, $0 < y$, $x+y\leq 2$, and $n$ is an integer.
To this end, let us take
\begin{equation}
\label{defxixy}
   \xi_{n,\alpha,\beta,\nu}(x,y)
   = \sum_{m\geq 1}
   \frac{\Phi_{\alpha}(xj_m) \Phi_{\beta}(yj_m)}{j_m^{2n+4} \Phi_{\nu +1}(j_m)^2},
\end{equation}
with the condition that
\begin{equation}
\label{rangooptimo}
   2 \Re \nu < 2n+1 + \Re \alpha + \Re \beta.
\end{equation}
According to~\eqref{Jb+1(jm)} and~\eqref{cotasupJ}, this guarantees that the series converges absolutely. These series are related to the Sneddon-Bessel series \eqref{lap} by
\begin{equation}
\label{lap2}
   S_{q_n}^{\alpha,\beta,\nu}(x,y)
   = \frac{\Gamma(\nu+2)^2 x^\alpha y^\beta}
   {2^{\alpha +\beta - 2\nu - 2} \Gamma(\alpha+1)\Gamma(\beta+1)}
   \xi_{n,\alpha,\beta,\nu}(x,y).
\end{equation}

Under the stronger condition
\begin{equation}
\label{rangopeq2v}
   2\Re \nu < 2n + \Re \alpha + \Re \beta,
\end{equation}
termwise differentation in~\eqref{defxixy} is allowed. In particular, we obtain
\begin{equation}
\label{ec:parcialxi}
   \frac{\partial}{\partial x} \xi_{n,\alpha,\beta,\nu}(x,y)
   = - \frac{x}{2(\alpha+1)} \xi_{n-1,\alpha+1,\beta,\nu}(x,y)
\end{equation}
(and the same for the other partial derivative).

\subsection{The case $n\geq 0$}

Let us assume firstly that $n$ is a nonnegative integer (later on we will address the case when $n$ is negative).

The function $f(z) = \Phi_{\alpha}(xz) \Phi_{\beta}(yz)$ meets the conditions of Theorem~\ref{teoremacuadrado} with $N=-\Re \alpha - \Re \beta -1$ and $\kappa = x + y$, and the condition $N + 2 \Re \nu < 2n$ of Theorem~\ref{teoremacuadrado} is therefore \eqref{rangooptimo}. Thus, \eqref{evalt=0} becomes
\begin{equation}
\label{eddp}
   \frac{\delta_n^{\alpha,\beta,\nu}(x,y)}{-8(\nu+1)^2}
   = (2\nu-2n) \xi_{n,\alpha,\beta,\nu}(x,y)
   + x \frac{\partial \xi_{n,\alpha,\beta,\nu} (x,y)}{\partial x}
   + y \frac{\partial \xi_{n,\alpha,\beta,\nu} (x,y)}{\partial y},
\end{equation}
where the function
\begin{equation}
\label{eb1}
   \delta_n^{\alpha,\beta,\nu}(x,y) = \frac{1}{(2n)!}
   \frac{d^{2n}}{dz^{2n}}
   \left( \frac{\Phi_{\alpha}(xz)\Phi_{\beta}(yz)}{\Phi_\nu(z)^2} \right)_{z=0}
\end{equation}
is a polynomial in $x^2$ and $y^2$ (that is, even powers of $x$ and $y$) which could be computed recursively from the Taylor coefficients of the functions $\Phi_{\alpha}$, $\Phi_\beta$ and $\Phi_\nu$ involved. Notice that
\begin{equation}
\label{jodr}
   \frac{\Phi_\alpha(xz)\Phi_\beta(yz)}{\Phi_\nu(z)^2}
   = \sum_{n=0}^\infty \delta_n^{\alpha,\beta,\nu}(x,y)z^{2n}.
\end{equation}

Let us write
\begin{equation}
\label{def:Ajk}
   \delta_n^{\alpha,\beta,\nu}(x,y) = \sum_{2j + 2k \leq 2n} A_{2j,2k,n}^{\alpha,\beta,\nu} x^{2j} y^{2k},
\end{equation}
and assume also, for simplicity, that $\nu\neq 0,1,\dots, n$.
Then, it is easy to see that the solution to~\eqref{eddp} is
\begin{multline}
\label{sod2}
   \xi_{n,\alpha,\beta,\nu}(x,y)
   = \frac{1}{16(\nu+1)^2} \bigg(
   {-} \sum_{2j + 2k \leq 2n}
   \frac{A_{2j,2k,n}^{\alpha,\beta,\nu}}{j+k+\nu - n} \,x^{2j} y^{2k}
   \\
   + x^{2n-2\nu} \phi_{n,\alpha,\beta,\nu}(y/x)
   \bigg),
\end{multline}
if~\eqref{rangopeq2v} holds, where $\phi_{n,\alpha,\beta,\nu}$ is a one variable function to be determined. In case $\nu \in \{0,1,\dots,n\}$, some logarithmic terms appear also.

Before going on, let us focus on the dependence of these functions and constants on the parameter $\alpha$ and $\beta$. It is apparent from~\eqref{eb1} and~\eqref{def:Ajk} that each $A_{2j,2k,n}^{\alpha,\beta,\nu}$ is a rational function of $\alpha$, $\beta$ and $\nu$.
If $n$, $\nu$, $x$, $y$ and $\beta$ (respectively, $\alpha$) are fixed, then the function $\Phi_\alpha(x j_m)$ is holomorphic on $\alpha \in \CC \setminus \{-1,-2,\dots\}$, and so is $\xi_{n,\alpha,\beta,\nu}(x,y)$ (resp., $\beta$) under the condition~\eqref{rangooptimo} (the series involved converge uniformly on $\alpha$-compacts, as follows from~\eqref{def:Phibeta} and~\eqref{cotasupJ}). The same applies therefore to $\phi_{n,\alpha,\beta,\nu}(y/x)$. This analytic dependence on~$\alpha$ (resp.,~$\beta$) will eventually allow us to extend some identities by analytic continuation.

Thus, formula~\eqref{sod2}, which in principle requires~\eqref{rangopeq2v} to hold, extends to the whole range~\eqref{rangooptimo} in this way: firstly, \eqref{ec:parcialxi} can be written as
\[
   \xi_{n,\alpha,\beta,\nu}(x,y)
   = - \frac{2\alpha}{x} \frac{\partial}{\partial x} \xi_{n+1,\alpha-1,\beta,\nu}(x,y)
\]
on the whole range~\eqref{rangooptimo}; using now~\eqref{sod2} on the right-hand side gives an expression for $\xi_{n,\alpha,\beta,\nu}(x,y)$ with holomorphic coefficients, which by analytic continuation must equal the coefficients in~\eqref{sod2}.

In view of~\eqref{sod2}, it is enough to find the function $\phi_{n,\alpha,\beta,\nu}$ to explicitly determine the function $\xi_{n,\alpha,\beta,\nu}$.
So let us now find a recursion for the functions $\phi_{n,\alpha,\beta,\nu}$. Given $0 < t < 1$, let us write
\[
   \varphi_{n,\alpha,\beta,\nu}(s)=\xi_{n,\alpha,\beta,\nu}(s,ts)
\]
for $s$ small enough.
Then~\eqref{ec:parcialxi} yields
\begin{align*}
   \varphi'_{n,\alpha,\beta,\nu}(s)
   &= \frac{\partial}{\partial x} \xi_{n,\alpha,\beta,\nu}(s,ts)
   + t \frac{\partial}{\partial y} \xi_{n,\alpha,\beta,\nu}(s,ts)
   \\
   &= - \frac{s}{2(\alpha+1)} \xi_{n-1,\alpha+1,\beta,\nu}(s,ts)
   - \frac{s t^2}{2(\beta+1)} \xi_{n-1,\alpha,\beta+1,\nu}(s,ts).
\end{align*}
The coefficient of $s^{2n-1-2\nu}$ on the right-hand side, as follows from~\eqref{sod2}, is
\[
   \frac{1}{16(\nu+1)^2} \left( - \frac{1}{2(\alpha+1)} \phi_{n-1,\alpha+1,\beta,\nu}(t)
   - \frac{t^2}{2(\beta+1)} \phi_{n-1,\alpha,\beta+1,\nu}(t) \right).
\]
On the other hand, \eqref{sod2} translates into
\begin{multline*}
   \varphi_{n,\alpha,\beta,\nu}(s)
   = \frac{1}{16(\nu+1)^2} \bigg(
   {-} \sum_{2j + 2k \leq 2n}
   \frac{A_{2j,2k,n}^{\alpha,\beta,\nu}}{j+k+\nu - n} \,s^{2j+2k} t^{2k}
   \\
   + s^{2n-2\nu} \phi_{n,\alpha,\beta,\nu}(t)
   \bigg),
\end{multline*}
so that the coefficient of $s^{2n-1-2\nu}$ in $\varphi'_{n,\alpha,\beta,\nu}(s)$ is
\[
   \frac{2n-2\nu}{16(\nu+1)^2} \phi_{n,\alpha,\beta,\nu}(t).
\]
Equating both formulas for the coefficient of $s^{2n-1-2\nu}$ results in
\begin{equation}
\label{rdc}
   \phi_{n,\alpha,\beta,\nu}(t) = \frac{1}{2\nu-2n} \left(
   \frac{1}{2(\alpha+1)} \phi_{n-1,\alpha+1,\beta,\nu}(t)
   + \frac{t^2}{2(\beta+1)} \phi_{n-1,\alpha,\beta+1,\nu}(t)
   \right).
\end{equation}
This recursion reduces the problem of finding $\xi_{n,\alpha,\beta,\nu}(x,y)$ to the case $n=0$, so let us concentrate on this. We first consider the case $\alpha = \beta = \nu$, then address the general case. Observe that condition~\eqref{rangooptimo} holds for $n=0$, $\alpha = \beta = \nu$. Now, \eqref{eb1} gives $\delta_0^{\alpha,\beta,\nu}(x,y) = 1$, so that \eqref{sod2} is
\[
   \xi_{0,\nu,\nu,\nu}(x,y)
   = \frac{1}{16(\nu+1)^2} \bigg(
   {-} \frac{1}{\nu} + x^{-2\nu} \phi_{0,\nu,\nu,\nu}(y/x)
   \bigg).
\]
Since $\Phi_\nu(j_m) = 0$, the definition~\eqref{defxixy} trivially gives
\[
   \xi_{0,\nu,\nu,\nu}(1,t) = 0,
   \quad 0 < t < 1.
\]
Therefore, $\phi_{0,\nu,\nu,\nu}(t) = \frac{1}{\nu}$ for $0 < t < 1$ and (by symmetry)
\begin{equation}
\label{cop}
   \xi_{0,\nu,\nu,\nu}(x,y) 
   = \begin{cases}
   \frac{1}{16\nu (\nu+1)^2}
   \left( -1 + x^{-2\nu} \right), & \text{if } y \leq x,
   \\
   \frac{1}{16\nu (\nu+1)^2}
   \left( -1 + y^{-2\nu} \right),& \text{if } x < y.
   \end{cases}
\end{equation}
Let us now find $\xi_{0,\alpha,\beta,\nu}(x,y)$. By symmetry, we can assume that $y\leq x$ without loss of generality. Sonin's formula~\eqref{eq:sonin} easily gives
\begin{multline*}
   \xi_{0,\alpha,\beta,\nu}(x,y)
   = \frac{4\Gamma(\alpha+1) \Gamma(\beta+1)}
   {\Gamma(\nu+1)^2\Gamma(\alpha-\nu)\Gamma(\beta-\nu)}
   \\
   \quad \times \iint_{[0,1]\times[0,1]}
   \xi_{0,\nu,\nu,\nu}(xr, ys) r^{2\nu+1} s^{2\nu+1}
   (1-r^2)^{\alpha-\nu-1} (1-s^2)^{\beta-\nu-1} \, dr \, ds
\end{multline*}
with the additional restrictions $\Re \alpha > \Re \nu > -1$, $\Re \beta > \Re \nu > -1$.
To evaluate this integral, let us separate the square $[0,1]\times [0,1]$ into the sets
\begin{align*}
   A_{x,y} &= \{(r,s) : x r \geq y s \},
   \\
   B_{x,y} &= \{(r,s) : x r < y s \},
\end{align*}
so that
\[
   \xi_{0,\nu,\nu,\nu}(x \sqrt{r},y \sqrt{s})
   = \begin{cases}
   \frac{1}{16\nu(1+\nu)^2} \left(-1+x^{-2\nu} r^{-2\nu} \right),
   & (r,s) \in A_{x,y},
   \\
   \frac{1}{16\nu(1+\nu)^2} \left(-1+y^{-2\nu}s^{-2\nu}\right),
   & (r,s) \in B_{x,y}.
   \end{cases}
\]
The above integral is therefore
\begin{align*}
   \frac{1}{16\nu(\nu+1)^2}
   \Bigg(
   &{-}\int_0^1 \int_0^1 r^{2\nu+1} s^{2\nu+1} 
   (1-r^2)^{\alpha-\nu-1} (1-s^2)^{\beta-\nu-1} \,dr \, ds
   \\
   &+ \iint_{A_{x,y}} x^{-2\nu} r^{-2\nu} r^{2\nu+1} s^{2\nu+1} (1-r^2)^{\alpha-\nu-1}
   (1-s^2)^{\beta-\nu-1} \, dr \, ds
   \\
   &+ \iint_{B_{x,y}} y^{-2\nu} s^{-2\nu} r^{2\nu+1} s^{2\nu+1} (1-r^2)^{\alpha-\nu-1}
   (1-s^2)^{\beta-\nu-1} \, dr \, ds
   \Bigg).
\end{align*}
The first of these three integrals is immediate:
\[
   \int_0^1 \int_0^1 r^{2\nu+1} s^{2\nu+1} (1-r^2)^{\alpha-\nu-1} (1-s^2)^{\beta-\nu-1} \, dr \, ds
   = \frac{\Gamma(\alpha-\nu) \Gamma(\beta-\nu) \Gamma(\nu+1)^2}
   {4\Gamma(\alpha+1) \Gamma(\beta+1)}.
\]
Taking into account that $y \leq x$, the second integral is
\begin{align*}
   &\iint_{A_{x,y}} x^{-2\nu} r^{-2\nu} r^{2\nu+1} s^{2\nu+1} (1-r^2)^{\alpha-\nu-1}
   (1-s^2)^{\beta-\nu-1} \, dr \, ds
   \\
   &\qquad= x^{-2\nu} \int_0^1
   \bigg(\int_{y^2 s^2/x^2}^1 r(1-r^2)^{\alpha-\nu-1} \, dr\bigg)
   s^{2\nu+1} (1-s^2)^{\beta-\nu-1} \, ds
   \\
   &\qquad= \frac{x^{-2\nu}}{2(\alpha-\nu)}
   \int_0^1 \left(1-\frac{y^2}{x^2}s^2 \right)^{\alpha-\nu}
   s^{2\nu+1} (1-s^2)^{\beta-\nu-1} \, ds
   \\
   &\qquad= \frac{x^{-2\nu} \Gamma(\nu+1) \Gamma(\beta-\nu)}{4(\alpha-\nu)\Gamma(\beta+1)}
   \,\FF{\nu-\alpha, \nu+1}{\beta+1}{\frac{y^2}{x^2}},
\end{align*}
where the integral representation of the hypergeometric function~${}_{2}F_{1}$ is used.

The third integral is
\begin{align*}
   &\iint_{B_{x,y}} y^{-2\nu} s^{-2\nu} r^{2\nu+1} s^{2\nu+1} (1-r^2)^{\alpha-\nu-1}
   (1-s^2)^{\beta-\nu-1} \, dr \, ds
   \\
   &\qquad= y^{-2\nu} \int_0^{y^2/x^2}
   \bigg( \int_{x^2 r^2 2/y^2}^1 s(1-s^2)^{\beta-\nu-1} \, ds\bigg)
   r^{2\nu+1} (1-r^2)^{\alpha-\nu-1} \,dr
   \\
   &\qquad= \frac{y^{-2\nu}}{2(\beta-\nu)}
   \int_0^{y^2/x^2} \left(1-\frac{x^2}{y^2} r^2 \right)^{\beta-\nu}
   r^{2\nu+1} (1-r^2)^{\alpha-\nu-1} \, dr
   \\
   &\qquad= \frac{y^2 x^{-2\nu-2}}{2(\beta-\nu)}
   \int_0^1 (1-t^2)^{\beta-\nu} t^{2\nu+1} \left(1-\frac{y^2}{x^2} t^2\right)^{\alpha-\nu-1}
   \, dt
   \\
   &\qquad= \frac{y^2 x^{-2\nu-2} \Gamma(\nu+1) \Gamma(\beta-\nu+1)}
   {4(\beta-\nu) \Gamma(\beta+2)}
   \, \FF{\nu-\alpha+1,\nu+1}{\beta+2}{\frac{y^2}{x^2}}.
\end{align*}
Putting together all the pieces,
\begin{multline*}
   \xi_{0,\alpha,\beta,\nu}(x,y)
   = \frac{1}{16\nu (\nu+1)^2} \Bigg(
   {-1}
   + \frac{x^{-2\nu} \Gamma(\alpha+1)}{\Gamma(\nu+1) \Gamma(\alpha-\nu+1)}
   \\
   \times \bigg(\FF{\nu-\alpha,\nu+1}{\beta+1}{\frac{y^2}{x^2}}
   + \frac{y^2 (\alpha-\nu)}{x^2 (\beta+1)}
   \, \FF{\nu-\alpha+1,\nu+1}{\beta+2}{\frac{y^2}{x^2}}
   \bigg)
   \Bigg).
\end{multline*}
Finally, the elementary relation
\[
  \FF{a,b+1}{c}{t} - t \,\frac{a}{c} \, \FF{a+1,b+1}{c+1}{t}
   = \FF{a,b}{c}{t}
\]
gives
\begin{align}
\label{forn2}
   \xi_{0,\alpha,\beta,\nu}(x,y) &= \frac{1}{16\nu(\nu+1)^2}\left(
   -1
   + x^{-2\nu} \binom{\alpha}{\nu}
   \,
   \FF{\nu-\alpha,\nu}{\beta+1}{\frac{y^2}{x^2}}
   \right),
\end{align}
valid for $\Re \alpha, \Re \beta > \Re \nu > -1$, and $0 < y \leq x$, $x+y < 2$.

Assuming that $\Re \nu > -1$, the identity \eqref{forn2} extends to the whole range given by~\eqref{rangooptimo}, i.e., $2\Re \nu <1+\Re \alpha+\Re \beta$, by an argument of analyticity (and also for $x+y=2$ if $2\Re \nu <\Re \alpha+\Re \beta$).

Let us consider now the case $\Re \nu<-1$. Take a positive integer $h$ such that $\Re \nu >-h/2-1$ and $\alpha,\beta $ satisfying $\Re\alpha, \Re \beta >\Re \nu+h$. Let us assume for the moment that $\nu$ is not half a negative integer; using the integral transform $T_{\alpha,\nu,h}$ defined by~\eqref{itch} acting on $x$ and $T_{\beta,\nu,h}$ acting on $y$, we get from~\eqref{lab2}:
\begin{gather*}
   \xi_{0,\alpha,\beta,\nu}(x,y)
   = \frac{4\Gamma(\alpha+1)\Gamma(\beta+1)}
   {\Gamma(\nu+1)^2\Gamma(\alpha-\nu)\Gamma(\beta-\nu)}
   \cdot \frac{\Gamma(2\nu+2)^2}{\Gamma(2\nu+2+h)^2}
   \\
   \times \iint\limits_{[0,1]\times[0,1]}
   \frac{\partial ^{2h}}{\partial r^h\,\partial s^h}
   \Big(\xi_{0,\nu,\nu,\nu}(x r, ys)(1-r^2)^{\alpha-\nu-1}(1-s^2)^{\beta-\nu-1}\Big) 
   (rs)^{2\nu +h+1} \, dr \, ds.
\end{gather*}
The function $\xi_{0,\nu,\nu,\nu}(x, y)$ is given by \eqref{cop}. Therefore, $\xi_{0,\nu,\nu,\nu}$ is analytic in $\nu$ and so is the function on the right hand side of the above identity (on the region $\Re \nu >-h/2-1$). For $\nu>-1$, integrating by parts we deduce that this function is equal to
\begin{multline}
\label{coj}
   \frac{4\Gamma(\alpha+1)\Gamma(\beta+1)}{\Gamma(\nu+1)^2\Gamma(\alpha-\nu)\Gamma(\beta-\nu)}
   \\
   \times \iint_{[0,1]\times[0,1]}
   \xi_{0,\nu,\nu,\nu}(xr, ys) (1-r^2)^{\alpha-\nu-1} (1-s^2)^{\beta-\nu-1} (rs)^{2\nu +1}
   \, dr \, ds.
\end{multline}
Proceeding as before, we deduce that for $\nu>-1$ and $0<y\leq x<1$, the function \eqref{coj} is equal to the function on the right hand side of \eqref{forn2}, which is also analytic in $\nu$. This proves the identity \eqref{forn2} also for $\Re \nu >-h/2-1$ and $\Re\alpha, \Re \beta>\Re \nu+h$. Using again an argument of analyticity on the variables $\alpha$ and $\beta$, we prove that \eqref{forn2} holds indeed for $2\Re \nu <1+\Re \alpha+\Re \beta$ (and also for $x+y=2$ if $2\Re \nu <\Re \alpha+\Re \beta$). The requirement that $\nu$ is not half a negative integer can be suppressed by continuity.

By the way, this means that
\begin{equation}
\label{phi0}
   \phi_{0,\alpha,\beta,\nu}(t)
   = \frac{1}{\nu} \binom{\alpha}{\nu}
   \,\FF{\nu-\alpha,\nu}{\beta+1}{t^2},
   \quad 0<t<1,
\end{equation}
which, together with~\eqref{rdc}, allows to find the functions $\phi_{n,\alpha,\beta,\nu}$ and, therefore, $\xi_{n,\alpha,\beta,\nu}$ for every positive integer~$n$.

For the sake of completeness, we display in full extension the cases $n=0,1$.

\subsubsection{The case $n=0$}

Identities \eqref{lap2} and \eqref{forn2} give
\[
   S_{q_0}^{\alpha,\beta,\nu}(x,y)
   = \frac{\Gamma(\nu+1)^2 x^\alpha y^\beta}{2^{q_0}\nu \Gamma(\alpha+1)\Gamma(\beta+1)}
   \left(
   -1 +  x^{-2\nu} \binom{\alpha}{\nu} \, \FF{\nu-\alpha,\nu}{\beta+1}{\frac{y^2}{x^2}}
   \right),
\]
valid for $2\Re\nu < 1 + \Re\alpha + \Re\beta$, $\nu\neq 0$, and $0 < y \leq x$, $x+y < 2$ (and also for $x+y=2$ if $2\Re \nu <\Re \alpha+\Re \beta$).

Now, $S_{q_0}^{\alpha,\beta,0}(x,y)$ can be obtained taking $\nu \to 0$ in the above formula: on one hand, after writing the hypergeometric function as a power series and looking for a hypergeometric representation of the resulting limit, it turns out that
\begin{multline*}
   \lim_{\nu \to 0}
   \frac{1}{\nu}
   \frac{x^{-2\nu} \Gamma(\alpha+1)}{\Gamma(\nu+1) \Gamma(\alpha-\nu+1)}
   \left(-1 + \FF{\nu-\alpha,\nu}{\beta+1}{\frac{y^2}{x^2}}
   \right)
   \\
   = - \frac{\alpha}{\beta+1} \frac{y^2}{x^2}
   \,\tresFFdos{1,1,1-\alpha}{2,\beta+2}{\frac{y^2}{x^2}}.
\end{multline*}
On the other hand, L'H\^opital's rule gives
\[
   \lim_{\nu\to 0}
   \frac{1}{\nu} \left(
   -1 + \frac{x^{-2\nu} \Gamma(\alpha+1)}{\Gamma(\nu+1) \Gamma(\alpha-\nu+1)}
   \right)
   = - 2 \log x + H_\alpha,
\]
where $H_\alpha = \gamma + \frac{\Gamma'(\alpha+1)}{\Gamma(\alpha+1)}$ is the harmonic number of order $\alpha$ (as usual $\gamma$ denotes the Euler constant).
Then, we conclude that
\begin{multline*}
   S_{q_0}^{\alpha,\beta,0}(x,y)
   \\
   = \frac{x^\alpha y^\beta}{2^{q_0}\Gamma(\alpha+1)\Gamma(\beta+1)}
   \biggl(
  -2 \log x + H_\alpha - \frac{\alpha}{\beta+1} \frac{y^2}{x^2} 
  \,\tresFFdos{1,1,1-\alpha}{2,\beta+2}{\frac{y^2}{x^2}}
   \biggr).
\end{multline*}

\subsubsection{The case $n=1$}

The identities~\eqref{lap2}, \eqref{sod2}, \eqref{rdc}, and~\eqref{phi0} lead to
\begin{multline*}
   S_{q_1}^{\alpha,\beta,\nu}(x,y)
   = \frac{\Gamma(\nu+1)^2 x^\alpha y^\beta}{2^{q_1-1}\Gamma(\alpha+1)\Gamma(\beta+1)}
   \Bigg(
   \frac{x^2}{2\nu(\alpha+1)} + \frac{y^2}{2\nu(\beta+1)} - \frac{1}{\nu^2-1}
   \\
   + \frac{x^{2-2\nu}\binom{\alpha}{\nu}}{2\nu(\nu-1)}
   \bigg(
   \frac{\FF{\nu-\alpha-1,\nu}{\beta+1}{\frac{y^2}{x^2}}}{\alpha+1-\nu}
   + \frac{\frac{y^2}{x^2} \,\FF{\nu-\alpha,\nu}{\beta+2}{\frac{y^2}{x^2}}}{\beta+1}
   \bigg)
   \Bigg),
\end{multline*}
valid for $2\Re \nu < 3+\Re \alpha+\Re \beta$, $\nu\neq 0,1$, and $0 < y\leq x$, $x+y < 2$ (also for $x+y=2$ if $2\Re \nu < 2+\Re \alpha+\Re \beta$).

Now, $S_{q_1}^{\alpha,\beta,0}(x,y)$ and $S_{q_1}^{\alpha,\beta,1}(x,y)$
follow taking limits as $\nu \to 0$ and $\nu \to 1$ in the above formula with the same kind of manipulations of the case $n=0$. We thus obtain that
\begin{multline*}
   S_{q_1}^{\alpha,\beta,0}(x,y)
   = \frac{x^\alpha y^\beta}{2^{q_1-2}\Gamma(\alpha+1)\Gamma(\beta+1)}
   \Bigg(
  2 - \frac{x^2}{(\alpha+1)^2} 
  \\
  - \frac{((\alpha+1)y^2+(\beta+1)x^2)(1+H_\alpha-2\log x)}{(\alpha+1)(\beta+1)}
  \\
  + \frac{y^2}{\beta+1} \,\tresFFdos{1,1,-\alpha}{2,\beta+2}{\frac{y^2}{x^2}}
  + \frac{\alpha y^4}{(\beta+1)(\beta+2)x^2} 
  \,\tresFFdos{1,1,1-\alpha}{2,\beta+3}{\frac{y^2}{x^2}}
   \Bigg)
\end{multline*}
and
\begin{multline*}
   S_{q_1}^{\alpha,\beta,1}(x,y)
   = \frac{x^\alpha y^\beta}{2^{q_1}\Gamma(\alpha+1)\Gamma(\beta+1)}
   \Bigg(
  \frac{x^2}{\alpha+1} + \frac{y^2}{\beta+1} 
  - \frac{-3+2H_\alpha-4\log x}{2}
  \\
  + \frac{\alpha}{\beta+1}\frac{y^2}{x^2} 
  \,\tresFFdos{1,1,1-\alpha}{2,\beta+2}{\frac{y^2}{x^2}}
   \Bigg).
\end{multline*}

\subsection{The case $n<0$}

Once we have determined the case $n \geq 0$, let us consider now the case when $n$ is a negative integer.

From \eqref{ec:parcialxi}, we get
\[
   \xi_{-1,\alpha,\beta,\nu}(x,y)
   = - \frac{2\alpha}{x} \frac{\partial}{\partial x} \xi_{0,\alpha-1,\beta,\nu}(x,y).
\]
An easy computation using \eqref{forn2} gives
\[
   \xi_{-1,\alpha,\beta,\nu}(x,y)
   = \frac{\Gamma(\alpha+1)}{4(\nu+1)^2\Gamma(\nu+1)\Gamma(\alpha-\nu)}
   \, x^{-2\nu-2} \, \FF{\nu-\alpha+1,\nu+1}{\beta+1}{\frac{y^2}{x^2}},
\]
from where the identity \eqref{sod22} for $n=-1$ follows easily using \eqref{lap2}. The identities \eqref{sod22} for an integer $n < -1$ can be proved similarly. The case $\nu\in\{0,1,\dots, n\}$ follows by passing to the limit.

\subsection{The one variable case}

We will need later the following Sneddon-Bessel series in one variable:
\begin{equation}
\label{lap1v}
   \Ss_n^{\alpha,\nu}(x)
   = \sum_{m=1}^\infty
   \frac{J_\alpha(x j_{m,\nu})}{j_{m,\nu}^{2n+\alpha-2\nu+2} J_{\nu+1}(j_{m,\nu})^2},
\end{equation}
where $0\leq x\leq 2$.

If we assume $2\Re\nu < 2n + 1/2 + \Re\alpha$, then the uniform convergence of the Sneddon-Bessel series \eqref{lap} holds also for $y=0$, hence the Sneddon-Bessel series \eqref{lap1v} arises after dividing the Sneddon-Bessel series \eqref{lap} by $y^\beta$ and taking $y\to 0$. This can be done in the identities \eqref{sod2i} and \eqref{sod22}. To this end, we have to compute the sequence
\[
   d_n^{\alpha,\nu} = \phi_n^{\alpha,\beta,\nu}(0), \quad n\geq 0.
\]
After some easy computations, using \eqref{hp1} and \eqref{hp2}, we arrive at
\[
   d_n^{\alpha,\nu}
   = \frac{\Gamma(\alpha+1)\Gamma(\nu-n)}{2^{2n}\Gamma(\nu+1)^2 \Gamma(n+1+\alpha-\nu)},
   \quad n\geq 0.
\]
Let us define the polynomial $\delta_n^{\alpha,\nu}(x)$ (of degree $2n$), $n\geq 0$, by the generating function
\begin{equation}
\label{gna}
   \frac{\Phi_{\alpha}(xz)}{\Phi_\nu(z)^2}
   = \sum_{n=0}^\infty \delta_n^{\alpha,\nu}(x)z^{2n}.
\end{equation}
This generating function allows the explicit computation of the polynomials $\delta_n^{\alpha,\nu}(x)$ recursively from the Taylor coefficients of the functions $\Phi_\alpha$ and~$\Phi_\nu$.

If we write
\begin{equation}
\label{coed1v}
   \delta_n^{\alpha,\nu}(x) = \sum_{j=0}^n A_{j,n}^{\alpha,\nu} x^{2j} ,
\end{equation}
then for $\nu \notin \{0,1,\dots, n\}$, $2\Re \nu<2n+1/2+\Re \alpha$ and $0< x< 2$
(and also for $x=2$ if $2\Re \nu <2n-1/2+\Re \alpha$), we have
\begin{multline}
\label{sod2i1v}
   \Ss_{n}^{\alpha,\nu}(x)
   = \frac{\Gamma(\nu+1)^2 x^\alpha }{2^{\alpha-2\nu+2}\Gamma(\alpha+1)}
   \bigg(
   \frac{\binom{\alpha}{\nu}\Gamma(\alpha-\nu+1)\Gamma(\nu-n)}
   {2^{2n}\Gamma(\nu+1)\Gamma(n+1+\alpha-\nu)}
   \, x^{2n-2\nu}
   \\
   - \sum_{j=0}^n \frac{A_{j,n}^{\alpha,\nu}}{j+\nu - n} x^{2j}
   \bigg),
\end{multline}
if $n\geq 0$, and
\[
   \Ss_{n}^{\alpha,\nu}(x)
   = \frac{\Gamma(\nu-n) }{2^{2n+\alpha-2\nu+2}\Gamma(n+\alpha-\nu+1)}
   \, x^{\alpha-2\nu+2n},
\]
if $n<0$.

For instance, for $n=0$ we get
\[
   \Ss_0^{\alpha,\nu}(x)
   = \frac{\Gamma(\nu+1)^2 x^\alpha}{2^{2+\alpha-2\nu}\nu\Gamma(\alpha+1)}
   \left(- 1 + \binom{\alpha}{\nu}x^{-2\nu} \right),
\]
assuming $0 < x < 2$, $2\Re \nu < \frac{1}{2} + \Re \alpha$, and $\nu \neq 0$ (also for $x=2$ if $ 2\Re \nu < \Re \alpha$). And for $\nu=0$ and $-\frac{1}{2} < \Re \alpha$,
\[
   \Ss_0^{\alpha,0}(x)
   = \frac{x^\alpha }{2^{\alpha+1} \Gamma(\alpha+1)}
   \Bigl(
   -\log x + \frac{1}{2} H_\alpha
   \Bigr)
\]
which was previously computed using a different method in~\cite[(4)]{DPVa}.

And for $n=1$, the corresponding Sneddon-Bessel series is
\[
   \Ss_1^{\alpha,\nu}(x)
   = \frac{\Gamma(\nu+1)\Gamma(\nu-1)x^\alpha}{2^{3+\alpha-2\nu}\Gamma(\alpha+1)}
   \left(
   \frac{(\nu-1)x^2}{2(\alpha+1)} - \frac{\nu}{\nu+1}
   + \frac{\binom{\alpha}{\nu}x^{2-2\nu}}{2(\alpha+1-\nu) }
   \right),
\]
assuming $0 < x < 2$, $2\Re \nu < \frac{5}{2} +\Re \alpha$, and $\nu \neq 0,1$.

The cases $\nu = 0,1$ can be deduced taking limits as $\nu\to 0$ and $\nu\to 1$, respectively. As a result,
\[
   \Ss_1^{\alpha,0}(x)
   = \frac{x^\alpha}{2^{\alpha+3} \Gamma(\alpha+2)} \biggl(
   x^2 \log x - \frac{1 + H_{\alpha+1}}{2}\, x^2 + \alpha+1
   \biggr)
\]
for $-\frac{5}{2} < \Re \alpha$, and
\[
   \Ss_1^{\alpha,1}(x)
   = \frac{x^\alpha }{2^{\alpha+1} \Gamma(\alpha+1)}
   \biggl(
   - \log x + \frac{-3+2H_\alpha}{4} + \frac{1}{2(\alpha+1)} \,x^{2}
   \biggr)
\]
whenever $-\frac{1}{2} < \Re \alpha$.

\section{Extending the Kneser-Sommerfeld expansion}
\label{ksex}

In this section we use the identities \eqref{sod2} to prove some extensions of the Kneser-Sommerfeld expansion~\eqref{ks}.

For the sake of completeness, we first prove the Kneser-Sommerfeld expansion \eqref{ks}.
In terms of the functions $\Phi_\nu$ defined by \eqref{def:Phibeta}, the identity to be proved is
\begin{multline}
\label{ksss}
   \sum_{m=1}^\infty
   \frac{\Phi_\nu(x j_{m,\nu})\Phi_\nu(y j_{m,\nu})}
   {j_{m,\nu}^2(j_{m,\nu}^{2}-z^2) \Phi_{\nu+1}(j_{m,\nu})^2}
   \\ =
   \frac{\pi J_\nu(y z) \big(Y_\nu(z)J_\nu(xz)-J_{\nu}(z)Y_\nu(xz)\big)}
   {16(\nu+1)^2 (xy)^{\nu} J_\nu(z)}.
\end{multline}
Write $\varphi(x,y,z)$ and $\psi(x,y,z)$ for the left and right hand sides of~\eqref{ksss}, respectively. Using the geometric series, we can write
\[
   \varphi(x,y,z)
   = \sum_{m=1}^\infty
   \frac{\Phi_\nu(x j_{m,\nu})\Phi_\nu(y j_{m,\nu})}
   {j_{m,\nu}^{2} (j_{m,\nu}^{2}-z^2) \Phi_{\nu+1}(j_{m,\nu})^2}
   = \sum_{n=0}^\infty \xi_{n,\nu,\nu,\nu}(x,y) z^{2n},
\]
where the function $\xi_{n,\nu,\nu,\nu}(x,y)$ is defined by~\eqref{defxixy}.

Consider now the partial differential equation
\begin{equation}
\label{kspx}
   - \frac{\Phi_\nu(xz)\Phi_\nu(y j_{m,\nu})}{8(\nu+1)^2\Phi_\nu(z)^2}
   = 2\nu U(x,y,z) - z \frac{\partial U}{\partial z}(x,y,z)
   + x \frac{\partial U}{\partial x}(x,y,z)
   + y \frac{\partial U}{\partial y}(x,y,z).
\end{equation}
On the one hand, using the partial differential equation \eqref{eddp} for $\xi_n(x,y)$ and \eqref{jodr}, we get that $\varphi(x,y,z)$ satisfies the partial differential equation \eqref{kspx}. On the other hand, it is a matter of computation to check that the function $\psi(x,y,z)$ satisfies the partial differential equation \eqref{kspx} as well.
Hence, we deduce that
\[
   \varphi(x,y,z) - \psi(x,y,z) = x^{-2\nu} \rho(y/x,z/x),
\]
for certain two-variable function $\rho$. But the definition of $\varphi$ and $\psi$ as both sides of \eqref{ksss} shows that
\[
   \varphi(1,y,z) = \psi(1,y,z) = 0,
\]
so that $\rho=0$ and the identity \eqref{ksss} holds.

For $\Re \beta> \Re \nu>-1$, the identity \eqref{ksee} follows by applying the integral transform $T_{\beta,\nu}$ defined by \eqref{itc} acting in the variable $y$ to both sides of the Kneser-Sommerfeld expansion \eqref{ks} and using Sonin's formula \eqref{eq:sonin}. With a standard argument of analyticity, the identity \eqref{ksee} extends to $-1 < \Re \nu < \Re \beta + 1$.

If $\Re \nu<-1$, we can take a positive integer $h$ satisfying $\Re \nu > - h/2 - 1$. When $\Re\beta > \Re\nu + h$, using the integral transform $T_{\beta,\nu,h}$ defined by \eqref{itch}, we prove the identity \eqref{ksee} for $\Re \beta > \Re \nu + h$, and using an argument of analyticity for $\Re\nu<\Re \beta+1$.

The Kneser-Sommerfeld expansion has the following one variable version:
\begin{equation}
\label{ks1}
   \sum_{m=1}^\infty
   \frac{j_{m,\nu}^\nu J_\nu(x j_{m,\nu})}{(j_{m,\nu}^{2}-z^2) J_{\nu+1}(j_{m,\nu})^2}
   = \frac{\pi z^\nu}{4 J_\nu(z)} \big(Y_\nu(z)J_\nu(xz)-J_\nu(z)Y_\nu(xz)\big),
\end{equation}
valid for $\Re \nu<1/2$ (it follows easily dividing the identity \eqref{ks} by $y^\nu$ and then taking limit as $y\to 0$).

Now, the well-known properties of the Bessel function of the second kind $Y_\nu$ allow us to rewrite \eqref{ks1} as
\begin{equation}
\label{ks1r}
   \sum_{m=1}^\infty
   \frac{j_{m,\nu}^\nu J_\nu(x j_{m,\nu})}{(j_{m,\nu}^{2}-z^2) J_{\nu+1}(j_{m,\nu})^2}
   = \frac{\pi z^\nu}{4\sin(\pi\nu)J_\nu(z)}
   \big(J_\nu(z)J_{-\nu}(xz)-J_{-\nu}(z)J_\nu(xz)\big)
\end{equation}
(as usual, if $\nu=n$ is a nonnegative integer, the function on the right can be understood as the limit as $\nu\to n$).

We finish this paper proving the following extension of the identity~\eqref{ks1r}:
\begin{multline}
\label{ks1re}
   \sum_{m=1}^\infty
   \frac{j_{m,\nu}^{2\nu-\alpha} J_\alpha(x j_{m,\nu})}
   {(j_{m,\nu}^{2}-z^2) J_{\nu+1}(j_{m,\nu})^2}
   =
   \frac{\pi }{4\sin(\pi\nu)J_\nu(z)}
   \\
   \times \Bigg(
   \frac{x^{\alpha-2\nu} J_\nu(z) \,\unoFFdos{1}{-\nu+1,\alpha-\nu+1}{-\frac{(xz)^2}{4}}}
   {2^{\alpha-2\nu}\Gamma(-\nu+1)\Gamma(\alpha-\nu+1)}
   - z^{2\nu-\alpha} J_{-\nu}(z) J_\alpha(xz)
   \Bigg),
\end{multline}
valid for $2\Re \nu < \Re \alpha + 1/2$, $\alpha - \nu + 1 \neq 0,-1,-2,\dots$, $\nu\notin \ZZ$ and $0< x\leq 2$ (for $\nu=n\in \NN$, we can extend it passing to the limit $\nu\to n$).

For $\Re \nu < 1/2$ and $2 \Re\nu < \Re\alpha + 1/2$, the proof is similar to that of identity \eqref{ksee}. For $\Re\alpha > \Re\nu$ and $-1 < \Re \nu < 1/2$, the identity \eqref{ks1re} follows by applying the integral transform $T_{\alpha,\nu}$ defined by \eqref{itc} to both sides of the one variable Kneser-Sommerfeld expansion \eqref{ks1r}: in the left hand side we use Sonin's formula \eqref{eq:sonin} and in the right hand side the identity \eqref{eaq} applied to the power expansion of $J_{-\nu}(xz)$.
Using a standard argument of analyticity, the identity \eqref{ks1re} extends to $-2 < 2 \Re \nu < \Re \alpha + 1/2$. If $\Re \nu<-1$, we can take a positive integer $h$ satisfying $\Re \nu > -h/2-1$ and use the integral transform $T_{\alpha,\nu,h}$ defined by~\eqref{itch}.

In order to extend the identity \eqref{ks1r} to $2\Re \nu < \Re \alpha + 1/2$, $\alpha - \nu + 1 \neq 0,-1,-2,\dots$, $\nu \notin \ZZ$ and $0< x\leq 2$, we proceed as follows. First of all, since both sides of the identity \eqref{ks1r} are analytic functions of $z$, it would be enough to prove \eqref{ks1r} for $|z|$ small enough. To this end, let us find suitable bounds for the Sneddon-Bessel series \eqref{lap1v}. Looking at \eqref{sod2i1v}, for each $\nu$-compact set $K\subset \CC\setminus \ZZ$ we have
\begin{equation}
\label{spmu1}
   \bigg|
   \frac{\Gamma(\nu+1)\binom{\alpha}{\nu}\Gamma(\alpha-\nu+1)\Gamma(\nu-n)x^{\alpha+2n-2\nu}}
   {2^{\alpha-2\nu+2+2n}\Gamma(\alpha+1)\Gamma(n+1+\alpha-\nu)}
   \bigg|
   \leq c_{\alpha,K},
\end{equation}
whith a positive constant $c_{\alpha,K}$ depending only on $\alpha$ and the $\nu$-compact $K$; this follows from the fact that $0 < x \leq 2$ and
\begin{align*}
   \Gamma(\nu-n) &= \frac{\Gamma(\nu+1)}{\nu(\nu-1)\cdots(\nu-n)},
   \\
   \Gamma(n+1+\alpha-\nu) &= (n+\alpha-\nu) \cdots (\alpha-\nu+2)(\alpha-\nu+1) \Gamma(\alpha-\nu+1).
\end{align*}
Now let us consider the analytic function
\[
   F_{j,\nu}(z) = \frac{z^{2j}}{\Phi_\nu(z)^2}.
\]
for each $j \geq 0$. There exists some constant $C_K$ depending on the $\nu$-compact $K$ such that
\[
   |F_{j,\nu}(z)| \leq C_K,
\]
on the circle $|z| = 1/2$. Then, Cauchy's integral formula gives
\[
   \bigg| \frac{F_{j,\nu}^{(2n)}(0)}{(2n)!} \bigg| \leq C_K 2^{2n}.
\]
Using again Cauchy's integral formula, together with \eqref{coed1v} and \eqref{gna}, it follows that
\begin{equation}
\label{mtc1}
   | A^{\alpha,\nu}_{j,n}|
   = \bigg|
   \frac{\Phi_\alpha^{(2j)}(0)F_{j,\nu}^{(2n)}(0)}{(2j)!\,(2n)!}
   \bigg|
   \leq d_\alpha C_K 2^{4n}.
\end{equation}
Inserting the estimates \eqref{spmu1} and \eqref{mtc1} in \eqref{sod2i1v} proves that
\begin{equation}
\label{mtc2}
   | \Ss^{\alpha,\nu}_{n}(x) | \leq e_{\alpha,K} 2^{6n},
\end{equation}
where $e_{\alpha,K}$ is a constant depending only on $\alpha$ and the $\nu$-compact $K$. Now, the power series expansion of $(j_{m,\nu}^{2}-z^2)^{-1}$ and the definition~\eqref{lap1v} lead to
\begin{align}
\label{esc}
   \sum_{m=1}^\infty
   \frac{j_{m,\nu}^{2\nu-\alpha} J_\alpha(x j_{m,\nu})}{(j_{m,\nu}^{2}-z^2) J_{\nu+1}(j_{m,\nu})^2}
   = \sum_{n=0}^\infty \Ss_n^{\alpha,\nu}(x) z^{2n}.
\end{align}
To be precise: the estimate \eqref{mtc2} shows that
if $2\Re\nu < \Re\alpha + 1/2$, $\alpha-\nu+1\neq 0,-1,-2,\dots$, $\nu\notin \ZZ$, $0< x\leq 2$, and $|z| < 1/2^3$, the identity \eqref{esc} holds and the right hand side is an analytic function of $\nu$. Since we have already proved that the identity \eqref{ks1re} for $-1 < \Re \nu < 1/2$ and its right hand side is also an analytic function of $\nu$, we deduce that the identity \eqref{ks1re} holds indeed for $2\Re \nu < \Re \alpha + 1/2$, $\alpha - \nu + 1 \neq 0,-1,-2,\dots$, $\nu\notin \ZZ$, $0< x\leq 2$.



\end{document}